\newtheorem{definition}{Definition}[section]
\newtheorem{theorem}{Theorem}[section]
\newtheorem{example}{Example}[section]
\newtheorem{lemma}{Lemma}[section]
\title{A classical measure of evidence for general null hypotheses}
\author{Alexandre Galv\~ao Patriota\\
     {\it \footnotesize Departamento de Estat\'istica, IME,
     Universidade de S\~ao Paulo}
     \vspace{-0.2cm}\\
     {\it \footnotesize Rua do Mat\~ao, 1010, S\~ao Paulo/SP, 05508-090, Brazil}
     \vspace{-0.2cm}\\
     {\footnotesize email: {\tt patriota@ime.usp.br}} \\ \\     
}
\date{}
\begin{document}
\maketitle
%\doublespacing
%\singlespace
\onehalfspace
\begin{abstract}
In science, the most widespread statistical quantities are perhaps $p$-values. A typical advice is to reject the null hypothesis $H_0$ if the corresponding $p$-value is sufficiently small (usually smaller than $0.05$). Many criticisms regarding $p$-values have arisen in the scientific literature. The main issue is that in general optimal $p$-values (based on likelihood ratio statistics) are not measures of evidence over the parameter space $\Theta$. Here, we propose an \emph{objective}  measure of evidence for very general null hypotheses that satisfies logical requirements (i.e., operations on the subsets of $\Theta$) that are not met by $p$-values (e.g., it is a possibility measure). We study the proposed measure in the light of the abstract belief calculus formalism and we conclude that it can be used to establish objective states of belief on the subsets of $\Theta$. Based on its properties, we strongly recommend this measure as an additional summary of significance tests. At the end of the paper we give a short listing of possible open problems.

\vspace{0.2cm}
\noindent{\it Keywords}: Abstract belief calculus, evidence measure, likelihood-based confidence, nested hypothesis, $p$-value, possibility measure, significance test
\end{abstract}
%----------------------------------------- SEÇÃO 1 --------------------------------------

\section{Introduction}\label{sec1}

Tests of significance are subjects of intense debate and discussion among many statisticians \citep{Kemp, Cox1977, BergerSellke, Aitkin,Schervish,Royall1997,  MayoCox}, scientists in general \citep{DuboisPrade1990, DarwicheGinsberg, Friedman, Wagenmakers} and philosophers of science \citep{Stern2003,Mayo2004}. In this paper, we discuss some limitations of $p$-values (which is a well-explored territory) and we propose an alternative measure to establish objective states of belief on the subsets of the full parameter space $\Theta$. Currently, many scholars have been studying the controversies and limitations of $p$-values \citep[see, for instance,][]{MayoCox, MayoSpanos2006, Wagenmakers, PereiraSternWechsler,Rice2010, Grendar2012, Diniz2012} and others have proposed some alternatives \citep{Zhang2009, Bickel2012}. In this paper, besides proposing an objective measure of evidence, we also provide a connection with the abstract belief calculus (ABC) proposed by \cite{DarwicheGinsberg}, which certifies the status of ``objective state of belief'' for our proposal, see Section \ref{ABCformalis} for specific details.

A procedure that measures the consistency of an observed data $x$ (the capital letter $X$ denotes the random quantity) with a null hypothesis $H_0: \theta \in \Theta_0$ is known as a significance test \citep{Kemp,Cox1977}. According to \cite{MayoCox}, to do this in the frequentist paradigm, we may find a function $t= t(x)$ called test statistic such that: (1) the larger the value of $t$ the more inconsistent are the data with $H_0$ and (2) the random variable $T = t(X)$ has known probability distributions (at least asymptotically) under $H_0$. The $p$-value related to the statistic $T$ \citep[the observed level of significance][]{Cox1977} is the probability of an unobserved $T$ to be, at least, as extreme as the observed $t$, under $H_0$. In the statistical literature is common  to informally define p-values as 
\begin{equation}\label{p-value-Inf}
p(\Theta_0) = P(T > t; \mbox{under } H_0),
\end{equation}
see, for instance, \cite{MayoCox}. Notice that, small values of $p$ indicate a discordance of the data probabilistic model from that specified in $H_0$. It is common practice  to set in advance a threshold value $\alpha$ to reject $H_0$ if and only if $p \leq \alpha$. The informal definition in Equation (\ref{p-value-Inf}) leads to mistaken interpretations and can feed many controversies, since one is driven to think that the measure $P$ and the statistic $T$ does not depend upon the null set $\Theta_0$. Some of the critics against the use of $p$-values follow. \cite{PereiraWechsler} point out some problems when the statistic $T$ does not consider the alternative hypothesis. \cite{Schervish} had argued that $p$-values as measures of evidence for hypotheses has serious logical flaws. \cite{BergerSellke} argue that $p$-values can be highly misleading measures of the evidence provided by the data against the null hypothesis.

In this paper, we provide a formal definition of $p$-values and present two examples where conflicting conclusions arise if $p$-values are used to take decisions regarding the inadequacy of a hypothesis. Then, we propose a measure of evidence for general null hypotheses that is free of those conflicts and has some important philosophical implications in the frequentist paradigm which will be detailed in future works.

Here, the null hypothesis is defined in a parametric context, let $\theta \in \Theta \subseteq \mathbb{R}^k$ be the model parameter, the null hypothesis is defined as $H_0: \theta \in \Theta_0$. One interpretation is: ``$H_0$ is considered true when the true unknown value of the parameter vector lies in the subset $\Theta_0 \subseteq \Theta$''. A second interpretation reads: ``$H_0$ is considered true when the probability measures indexed by the elements of $\Theta_0 \subseteq \Theta$ explain more efficiently the random events than the probability measures indexed by the elements of $\Theta_0^c = \Theta - \Theta_0$'', where \emph{more efficiently} is relative to certain criteria. %The parameter space of the alternative hypothesis will always be the complement of $\Theta_0$ with respect to $\Theta$, therefore, if the alternative hypothesis is specified the full parameter space has to take it into account. 
Basically, a hypothesis test attempts to reduce a family of possible measures that governs the data behavior, say $\mathcal{P}_X = \{\mu_\theta: \ \theta \in \Theta\}$, to a more restricted one, say $\mathcal{P}_X^0 = \{\mu_\theta : \ \theta \in \Theta_0\}$. 

In the following we define a $p$-value precisely, then we can properly understand some of its features. Let $\mathcal{P}_X = \{\mu_\theta; \ \theta \in \Theta\}$ be a family of probability measures induced by the random sample $X$. In optimal tests, the reader should notice that the statistic $T$ for testing $H_0$  depends, in general, on the null set $\Theta_0$, thus it should be read as $T_{\Theta_0}$ instead of $T$. In order to avoid further misunderstandings, we decided to take into account this index from now on. As $T_{\Theta_0}$ is a function of the random sample we also have an induced family of probability measures $\mathcal{P}_{T_{\Theta_0}} = \{P_{\theta, \Theta_0}; \theta \in \Theta\}$, where $P_{\theta,{\Theta_0}} \equiv \mu_\theta T_{\Theta_0}^{-1}$ is a measure that depends on the null set $\Theta_0$ and the parameter vector $\theta$. The informal statement ``under $H_0$'' means  a subfamily of probability measures restricted to the null set, namely $\mathcal{P}_{T_{\Theta_0}}^0 = \{P_{\theta, \Theta_0}; \theta \in \Theta_0\}$. Then, it is possible to define many p-values as we can see below 
\[p_\theta(\Theta_0) = P_{\theta, \Theta_0}(T_{\Theta_0}>t), \quad \mbox{ for } \theta \in \Theta_0\]
and the most conservative p-value over $\Theta_0$ can be defined as
\[ p(\Theta_0)= \sup_{\theta \in \Theta_0} p_\theta(\Theta_0). \]

When $P_{\theta, \Theta_0} \equiv P_{\Theta_0}$ for all $\theta \in \Theta_0$, i.e., the statistic $T_{\Theta_0}$ is ancillary to the family $\mathcal{P}_T^0$, then all p-values are equal: $p_{\theta^{'}}(\Theta_0) = p_{\theta^{''}}(\Theta_0)$ for all $\theta^{'}, \theta^{''} \in \Theta_0$ (see Examples \ref{Ex-normal1} and \ref{reg}). If  $P_{\theta, \Theta_0} \equiv P_{\Theta_0}$  for all $\theta \in \Theta_0$ happen asymptotically we say that $T_{\Theta_0}$ is asymptotically ancillary to $\mathcal{P}_T^0$. As it is virtually impracticable in complex problems to find exact probability measures $P_{\theta, \Theta_0}$ for all $\theta \in \Theta_0$ we can use the asymptotic distribution, that is, $P_{\Theta_0}$ is the probability measure correspondent to the asymptotic distribution of $T_{\Theta_0}$. 

There are many ways to find a test statistic $T_{\Theta_0}$, it essentially depends on the topologies of $\Theta_0$ and $\Theta$. When $\Theta_0$ and its complement have one element each, the Neyman-Pearson Lemma provides the most powerful test (which is the likelihood ratio statistic) for any pre-fixed significance value. Naturally, we can use this statistic to compute a $p$-value. For the general case, the generalization of likelihood ratio statistic (which will be called only by likelihood ratio statistic) is given by
\[
\lambda_{\Theta_0}(x) = \frac{\sup_{\theta \in \Theta_{0}} L(\theta, x)}{\sup_{\theta \in \Theta} L(\theta, x)}
\] where $L(\theta, x)$ is the likelihood function. The testing statistic can be defined as $T_{\Theta_0} = -2\log(\lambda_{\Theta_0}(X))$, since this has important asymptotic properties as we will see below. Observe that the likelihood ratio statistic does take into account the alternative hypothesis (since $\Theta = \Theta_0 \cup \Theta_1$  with $\Theta_1$ being the parameter space defined in the alternative hypothesis). \cite{Mudholkar} studied optimal $p$-values for very general null hypotheses (considering both one and two-sided null hypotheses) that take into account the corresponding alternative hypotheses, some of these optimal $p$-values are computed by using likelihood ratio statistics. The reader should notice that the likelihood ratio statistic is a generalization for uniformly most powerful tests \citep[see][]{BIRKES} under general hypothesis testing. For general linear hypothesis, $H_0: C \theta  = d$, we can also resort to a Wald-type statistic
\[
W(x) = (C\widehat{\theta} - d)^\top [CAC^\top]^{-1}(C\widehat{\theta} - d)
\] with $\widehat{\theta}$ being a consistent estimator that, under $H_0$, is (asymptotically) normally distributed and $A$ its (asymptotic) covariance-variance matrix computed at $\widehat{\theta}$. These two statistics share many important properties and are widely used in actual problems. Suppose that $X = (X_1, \ldots, X_n)$ is an independent and identically distributed (iid) random sample, under some regular conditions on $L(\theta, X)$ and when $\Theta_0$ is a smooth (semi)algebraic manifold  with $\mbox{dim}(\Theta_0)< \mbox{dim}(\Theta)$, it is well known that, under $H_0$, $T_{\Theta_0}(X) = -2\log(\lambda_{\Theta_0}(X))$ converges in distribution to a chisquare distribution with $r$ degrees of freedom (from now on, it is denoted just by $\chi^2_r$), where $r= \mbox{dim}(\Theta) - \mbox{dim}(\Theta_0)$ is the co-dimension of $\Theta_0$. The asymptotic distribution of $W(X)$ is a chisquare with rank-of-$C$ degrees of freedom, which is the very same of the likelihood ratio statistics for linear general null hypotheses. We can also mention the Score test statistics that, under appropriated conditions, has asymptotically the same distribution as the two previous statistics. That is, different $p$-values can be computed for the same problem of hypothesis testing by using different procedures. In this paper we shall only use the procedure based on the (generalized) likelihood ratio statistics, since it has optimal asymptotic properties \citep[see][]{Bahadur}. From now on, ``asymptotic $p$-values'' means $p$-values computed by using the asymptotic distribution of the test statistic.

Sometimes practitioners have to test a complicated hypothesis $H_{01}$. By reasons of easiness of computations, instead of testing $H_{01}$, they may think of testing another auxiliar hypothesis $H_{02}$ such that if $H_{02}$ is false then $H_{01}$ is also false. This procedure is used routinely in medicine and health fields in general, e.g., in genetic studies one of the interests is to test genotype frequencies between two groups \citep{Izbicki}. In this example, we know that $H_{01}:$ ``homogeneity of genotype frequencies between the groups'' implies $H_{02}:$ ``homogeneity of allelic frequencies between the groups''. By using mild logical requirements, if we find evidence against $H_{02}$ we expect to claim evidence against $H_{01}$. However, as it is widely known, $p$-values do not follow this logical reasoning. 

Let $H_{01}: \theta \in \Theta_{01}$ and $H_{02}: \theta \in \Theta_{02}$ be two null hypotheses such that $\Theta_{01}\subseteq \Theta_{02}$, i.e., $H_{01}$ is nested within $H_{02}$. It is expected by the logical reasoning to find more evidence against $H_{01}$ than $H_{02}$ for the same observed data $X = x$. In other words, if $p_i$ is the asymptotic $p$-value computed under $H_{0i}$, for $i=1,2$, respectively, then we expect to observe $p_1 < p_2$.
 However, if dimensions of the spaces described in these nested hypotheses are different, then their respective asymptotic $p$-values will be computed under different metrics and therefore inverted conclusions may occur, i.e., more disagreement with $H_{02}$ than $H_{01}$ (i.e., $p_2<p_1$). That is to say, for a given data and a preassigned $\alpha$, it may happen $p_1> \alpha$ and $p_2 < \alpha$. One, therefore, may be confronted at the same time with ``evidence'' to reject $H_{02}$ and without ``evidence'' to reject $H_{01}$. Of course, the problem here is not with the approximation for the $p$-values, computed by using limiting reference distribution, the problem also happens with the exact ones. The example below shows the above considerations for exact $p$-values in a multiparametric scenario.

\begin{example}\label{Ex-normal1} Consider an independent and identically distributed random sample $X= (X_1,$ $\ldots, X_n)$ where $X_1 \sim N_2(\mu, I)$ with $\mu = (\mu_1, \mu_2)^\top$ and $I$ is a $(2 \times 2)$ identity matrix. The full parameter space is $\Theta = \{(\mu_1, \mu_2): \ \mu_1, \mu_2 \in \mathbb{R}\} = \mathbb{R}^2$. For this example we consider two particular hypotheses. Firstly, suppose that we want to test  $H_{01}: \theta \in \Theta_{01}$, where $\Theta_{01} = \{(0,0)\}$, then the likelihood ratio statistic is \[\lambda_{\Theta_{01}}(X) = \frac{\sup_{\theta \in \Theta_{01}} L(\theta, X)}{\sup_{\theta \in \Theta} L(\theta, X)} = \exp\bigg(-\frac{n}{2} \bar{X}^\top\bar{X}\bigg),\] where $\bar{X}$ is the sample mean. Taking $T_{\Theta_{01}}(X) = -2 \log(\lambda_{\Theta_{01}}(X))$ we know that, under $H_{01}$, $T_{\Theta_{01}} \sim \chi^2_2$. Secondly, suppose that the null hypothesis is $H_{02}: \theta \in \Theta_{02}$, where $\Theta_{02} \equiv \{(\mu_1, \mu_2): \ \mu_1 = \mu_2, \ \mu_1, \mu_2 \in \mathbb{R}\}$,  the likelihood ratio statistic is \[\lambda_{\Theta_{02}}(X) = \frac{\sup_{\theta \in \Theta_{02}} L(\theta, X)}{\sup_{\theta \in \Theta} L(\theta, X)} = \exp\bigg(-\frac{n}{2} \bar{X}^\top(I - \frac{1}{2}ll^\top)\bar{X}\bigg),\] where $l = (1,1)^\top$. Taking $T_{\Theta_{02}}(X) = -2 \log(\lambda_2(X))$ it is possible to show that, under $H_{02}$, $T_{\Theta_{02}} \sim \chi^2_1$. Notice that, in this example, the Wald statistics for these two null hypotheses $H_{01}$ and $H_{02}$ are equal to $T_{\Theta_{01}}$ and $T_{\Theta_{02}}$, respectively. Assume that the sample size is $n=100$ and the observed sample mean is $\bar{x} = (0.14, -0.16)^\top$, then $T_{\Theta_{01}}(x) = 4.52$ (with $p$-value $p_1 = 0.10$) and $T_{\Theta_{02}}(x) = 4.5$ (with $p$-value $p_2 = 0.03$). These $p$-values showed evidence against $\mu_1 = \mu_2$, but not against $\mu_1 = \mu_2 = 0$. However, if we reject that $\mu_1 = \mu_2$ we should technically reject that $\mu_1 = \mu_2 = 0$ (using the very same data). 
\end{example}

This issue does not happen only with the likelihood ratio statistic, it happens with many other classical test statistics (score and others) that consider how data should behave under $H_0$. As $p$-values are just probabilities to find unobserved statistics, at least, as large as the observed ones, the conflicting conclusion presented in the above example is not a logical contradiction of the frequentist method. \emph{This issue happens because a $p$-value was not designed to be a measure of evidence over subsets of $\Theta$}. We must say that $p$-values do exactly the job they were defined to do. However, in the practical scientific world, researches use $p$-values to take decisions and, hence, they eventually may face some problems with consistency of conclusions. P-values must therefore be used with caution when taking decisions about a null hypothesis. 

The example below presents a data set which produces surprising conclusions for regression models. 

\begin{example}\label{reg} Consider a linear model: $y = xb + e$, where $b = (b_1, b_2)^\top$ is a vector formed by two regression parameters, $x = (x_1, x_2)$ is an $(n \times 2)$ matrix of covariates and $e \sim N_{n}(0, I_n)$ with $I_n$ the $n\times n$ identity matrix. It is usual to verify if each component of $b$ is equal to zero and to remove from the model the non-significant parameters. The majority of statistical routines present the $p$-values for $H_{0i}: b_i = 0$, say $p_i$, for $i=1,2$. However, sometimes both $p$-values are greater than $\alpha$ and there exists a joint effect that cannot be discarded. As these hypotheses include a more restricted one, $H_{03}: b = 0$, it is of general advice to reject $H_{03}$ only if the $p$-value $p_3$ is smaller  than $\alpha$ (this decision obeys the logical reasoning). We expect to observe more evidence against $H_{03}$ than either $H_{01}$ and $H_{02}$. In fact, almost always the $p$-value $p_3$ is smaller than both $p_1$ and $p_2$, as expected. However, as we shall see below, an inversion of conclusions may occur. To see that, let us present the main ingredients. The maximum likelihood estimator of $b$ is $\widehat{b} = (x^\top x)^{-1} x^\top y$ and the likelihood ratio statistics for testing $H_{01}$, $H_{02}$ and $H_{03}$ are respectively
\[\lambda_{\Theta_{0i}}(y,x) = \exp\bigg(-\frac{1}{2} y^\top\bigg(x(x^\top x)^{-1}x^\top - \dot{x}_i(\dot{x}_i^\top \dot{x}_i)^{-1} \dot{x}_i^\top\bigg)y\bigg)\] for $i = 1,2$ and
\[ \lambda_{\Theta_{03}}(y,x) = \exp\bigg(-\frac{1}{2} y^\top x(x^\top x)^{-1}x^\top y\bigg),\] where $\dot{x}_1 = x_2$ and $\dot{x}_2 = x_1$.
It can be showed that $T_{\Theta_{0i}}(Y,x) = -2 \log(\lambda_{\Theta_{0i}}(Y,x)) \sim \chi^2_{s_i}$ for all $i=1,2,3$, where, for this example, $s_1 = s_2 = 1$ and $s_3 = 2$. Again, although $T_{\Theta_{03}}  > T_{\Theta_{0i}}$, for $i=1,2$, the metrics to compute the $p$-values are different and odd behavior may arise as we notice in the following data,
\[
\begin{array}{rrrrrrrrrrr}\hline
 y & -1.29& 1.09& -0.16&  0.44& -0.22& -1.85& 0.91& 0.54& 0.06&  0.37\\
x_1 & 3.00& 8.00 & 5.00&  9.00 &10.00 & 1.00 &6.00& 9.00 &6.00 & 5.00\\
x_2 & 9.00& 7.00 & 7.00& 10.00 & 7.00 & 8.00 &6.00& 6.00 &3.00 & 2.00\\\hline
\end{array}
\]
Here, the observed three statistics are $t_{\Theta_{01}} = 4.48$ (with $p$-value $p_1 = 0.03$), $t_{\Theta_{02}} = 4.00$ (with $p$-value $p_2 = 0.045$) and $t_{\Theta_{03}} = 4.59$ (with $p$-value $p_3 = 0.10$). For these data, we have problems with the conclusion, since we expected to have much more evidence against $H_{03}$ than $H_{01}$ and $H_{02}$. Notice that, $p_3/p_1 \sim 3.3$ and $p_3/p_2 \sim 2.2$.
\end{example}

%The last example is applied in genetics for testing the homogeneity of genotype frequencies between two groups.

%\begin{example}\label{trino} Let $X = (X_1, X_2, X_3)^\top \sim \mbox{Multinomial}(n, \pi_x)$ and $Y = (Y_1, Y_2, Y_3)^\top \sim \mbox{Multinomial}(m, \pi_y)$, where $\pi_x = (\pi_x^{(1)}, \pi_x^{(2)}, \pi_x^{(3)})$ and $\pi_y = (\pi_y^{(1)}, \pi_y^{(2)}, \pi_y^{(3)})$ such that $\pi_x^{(1)}+\pi_x^{(2)}+\pi_x^{(3)} = 1$ and $\pi_y^{(1)}+\pi_y^{(2)}+\pi_y^{(3)} = 1$. The hypothesis of homogeneity of allelic frequencies between two groups may be described by $H_{01}: 2\pi_x^{(1)} + \pi_x^{(2)} = 2\pi_y^{(1)}+ \pi_y^{(2)}$ and the hypothesis of homogeneity of genotype frequencies between two groups may be described by $H_{02}: \pi_x = \pi_y$. 
%\end{example}

Many other examples for higher dimensions can be built on, but we think that these two instances are sufficient to illustrate the weakness of $p$-values when it comes to decide acceptance or rejection of specific hypotheses, for other examples we refer the reader to \cite{Schervish}. In the above examples, we used the very same procedure to test both hypotheses  $H_{01}$ and $H_{02}$ (i.e., likelihood ratio statistics). Some scientists and practitioners would become confused with these results and it would be very difficult explain to them the reason for that. We believe that the development of a true measure of evidence for null hypotheses that does not have these problems might be welcome by the scientific community.

In summary: in usual frequentist significance tests, a general method of computing test statistics can be used (likelihood ratio statistics, Wald-type statistics, Score statistic and so forth). The distribution of the chosen test statistic depends on the null hypothesis and this leads to different metrics in the computation of $p$-values (this is the major factor that gives the basis for the frequentist interpretations of $p$-values). As each of these metrics depend on the dimension of the respective null hypotheses, conflicting conclusions may arise for nested hypotheses. In the next section, we present a new measure that can be regarded as a measure of evidence for null hypotheses without committing any logical contradictions.

This paper unfolds as follows. In Section \ref{sec-Evid} we present a definition of evidence measure and propose a frequentist version of this measure. Some of its properties are presented in Section \ref{prop}. A connection with the abstract belief calculus is showed in Section \ref{ABCformalis}. Examples are offered in Section \ref{Exam}. Finally, in Section \ref{Disc} we discuss the main results and present some final remarks. 

\section{An evidence measure for null hypotheses}\label{sec-Evid}

In this section we define a very general procedure to compute a measure of evidence for $H_0$. The concept of evidence was discussed by \cite{Good1983} in a great philosophical detail. We also refer the reader to \cite{Royall1997} and its review \cite{Vieland} for relevant arguments to develop new methods of measuring evidence.  As in the previous section, $H_0: \theta \in \Theta_0$ is the null hypothesis, where $\Theta_0 \subseteq \Theta \subseteq \mathbb{R}^k$ is a smooth manifold. Below, we define what we mean by an objective evidence measure.

\begin{definition}\label{Evid} Let $\mathcal{X} \subseteq \mathbb{R}^n$ be the sample space and $\mathbb{P}(\Theta)$ the power set of $\Theta$. A function $s : \mathcal{X}\times \mathbb{P}(\Theta) \to [0,1]$ is a measure (we shall write just $s(\Theta_0) \equiv s(X, \Theta_0)$ for shortness of notation, where $X \in \mathcal{X}$ is the data) of evidence of null hypotheses if the following items hold
\begin{enumerate}
\item[1.] $s(\varnothing) = 0$ and $s(\Theta) = 1$,
\item[2.] For any two null hypotheses $H_{01}: \theta \in \Theta_{01}$ and $H_{02}: \theta \in \Theta_{02}$, such that $\Theta_{01} \subseteq \Theta_{02}$, we must have $s(\Theta_{01}) \leq s(\Theta_{02})$,
\end{enumerate}
\end{definition}

The above definition is the least we would expect from a coherent measure of evidence. Items 1 and 2 of Definition \ref{Evid} describe a plausibility measure \citep{Friedman}, which generalizes probability measures. As showed in the previous section, $p$-values are not even plausibility measures on $\Theta$, since Condition 2 of Definition \ref{Evid} is not satisfied. Therefore, they cannot be regarded as measures of evidence. Bayes factors are also not plausibility measures on $\Theta$, i.e., Condition 2 fails to be held,  \citep[see][]{LavineSchervish, Bickel2012}. As pointed out by a referee, based on Definition \ref{Evid}, many measures can be qualified as a measure of evidence, even posterior probabilities. Here we restrict ourselves to be objective in the sense that no prior distributions neither over $\Theta$ nor for $H_0$ are specified, i.e., that the strength of evidence does not vary from one researcher to another \citep{Bickel2012}. Moreover, the proposed measure of evidence should be invariant under reparametrizations, this is an important feature to guarantee that the measure of evidence is not dependent upon different parametrizations of the model. In order to find a purely objective measure of evidence with these characteristics, without prior distributions neither over $\Theta$ nor $H_0$, we define a likelihood-based confidence region by

\begin{definition}\label{conf} A likelihood-based confidence region with level $\alpha$ is
\[
{\Lambda}_\alpha = \{\theta \in \Theta: T_{\theta} \leq F_\alpha\},
\] where $T_{\theta}= 2(\ell(\widehat{\theta}) - \ell(\theta))$, $\widehat{\theta}= \arg \sup_{\theta \in \Theta} \ell(\theta)$ is the maximum likelihood estimator, $\ell$ is the log-likelihood function and $F_{\alpha}$ is an $1-\alpha$ quantile computed from a cumulative distribution function $F$, i.e., $F(F_{\alpha}) = 1-\alpha$. Here, $F$ is (an approximation for) the cumulative distribution function  of the random variable $ T_{\theta_0} $ that does not depend on $\theta_0$, where $\theta_0$ is the true value. 
\end{definition}

Notice that, the cumulative distribution of  $ T_{\theta_0} $ is given by
\[F(t) = P_{\theta_0}(T_{\theta_0} \leq t).\]
Here, we assume that it is free of $\theta_0$ (otherwise, consider the asymptotic approximation).

As aforementioned, some of the optimal $p$-values studied by \cite{Mudholkar} are explicitly based on likelihood ratio statistics and this motives the use of the likelihood-based confidence region to build our measure of evidence. Moreover, as pointed out by a referee, \cite{Sprott} provides examples for confidence regions not based on likelihood functions that produce absurd regions. They are strong cases for using likelihood-confidence regions, i.e., confidence regions that are based on likelihood functions. 

Below we define an evidence measure for the null hypothesis $H_0$.

\begin{definition}\label{s-value} Let $\Lambda_\alpha$ be the likelihood-based confidence region. The evidence measure for the null hypothesis $H_0: \theta \in \Theta_0$ is the function $s : \mathcal{X}\times \mathbb{P}(\Theta) \to [0,1]$ such that \[s = s(\Theta_0) \equiv s(X,\Theta_0) = \max\{0,\sup\{\alpha \in (0,1): \ \Lambda_\alpha \cap  \Theta_0 \neq \varnothing\}\}.\] We shall call $s$-value for short. %The evidence measure in favour of $H_0$ is the value $s = 1 - \bar{q}$.
\end{definition}

In a first draft of this paper we call this by $q$-value, but in the final version a referee suggested to change by $s$-value. We can interpret the value $s$ as the greatest significance level for which at least one point of the closure of $\Theta_0$ lies inside the confidence region for $\theta$. When $F$ is continuous, a simple way of computing an $s$-value is to build high-confidence regions for $\theta$ that includes at least one point of the closure of $\Theta_0$ and gradually decreases the confidence until the $(1-s)\times 100\%$ confidence region border intercepts just the last point(s) of the closure of $\Theta_0$. The value $s$ is such that the $(1 - s + \delta)\times 100\%$ confidence region does not include any points of $\Theta_0$, for any $\delta>0$. Figure \ref{fig:def} illustrates some confidence regions  $\Lambda_{\alpha}$ for $\theta = (\theta_1, \theta_2)$ considering different values of $\alpha$. The dotted line is $\Lambda_{s_1}$, where $s_1$ is the $s$-value for testing $H_{01}: \theta_1 = 0$. The dot-dashed line  is $\Lambda_{s_2}$, where $s_2$ is the $s$-value for testing $H_{02}: \theta_2 = 0$. The dashed line  is $\Lambda_{s_3}$, where $s_3$ is the $s$-value for testing $H_{03}: \theta_1^2 + \theta_2^2 = 1$. In \cite{DuboisHH2004} is studied measures of confidence and confidence relations in a general fashion, here we shall show that our measure satisfies all confidence relations described by the authors. 

It must be said that $p$-values and confidence regions are naturally related when $H_0$ is simple and specifies the full vector of parameters. We will see that, in this precise case, $s$-values and $p$-values are the very same; on the other hand, if $H_0$ is simple and specifies just a partition of $\theta$, then $s$-values and $p$-values will be different. Also, when $H_0$ is composed (or specifies parameter curvatures) tests based on confidence regions are not readily defined. Our approach is a generalization of tests based on confidence regions under general composed null hypotheses. We shall see that this procedure has many interesting properties, is logically consistent and has a simple interpretation. We hope that these features would draw the attention of the statistical community for this new way to conduct tests of hypotheses.

\cite{Mauris} proposed a possibility measure based on confidence intervals to deal with fuzzy expression of uncertainty in measurement. This proposal is compatible with recommended guides on the expression of uncertainty. The authors consider ``identify each confidence interval of level $1-\alpha$, with each $\alpha$-cut of a fuzzy subset, which thus gathers the whole set of confidence intervals in its membership function'' \citep{Mauris}. The goals of the latter paper are different from those of this present paper, moreover the authors did not prove the properties of their proposed measure, which naturally depend on type of the adopted confidence intervals. Here, instead of considering confidence intervals, we consider regions of confidence and connects this general formulation to quantify the evidence yielded by data for or against the null hypothesis. In addition, we prove the properties that are essential for evidence measures considering this general formulation.

Observe that a large value of $s(\Theta_0)$ indicates that there exists at least one point in $\Theta_0$ that is near the maximum likelihood estimator, that is, data are not discrediting the null hypothesis $H_0$. Otherwise, a small value of $s(\Theta_0)$ means that all points of $\Theta_0$ are far from the maximum likelihood estimator, that is, data are discrediting the null hypothesis. The metric that says what is near or far from $\widehat{\theta}$ is the (asymptotic) distribution of $T_{\theta}$. These statements are readily seen by drawing confidence regions (or intervals) with different confidence levels, see Figure \ref{fig:def}. 

\cite{Bickel2012} developed a method based on the law of likelihood to quantify the weight of evidence for one hypothesis over another. Here, we proposed a classical possibility measure over $\Theta$ based on likelihood-based confidence regions, see Definition \ref{s-value}. Although these approaches are based on similar concepts, they capture different values from the data (we do not investigate this further in the present paper). As pointed out by a referee:  ``the proposed evidence measure relates to that proposed by \cite{Bickel2012} and \cite{Zhang2009} \emph{via} a monotone transformation determined by $F$. Because $F$ is fixed for a given model, there is an equivalence (up to a monotone transformation) between the two measures within each parametric model. However, since $F$ may change in different models (depending on the dimension of $\theta$, for instance), these two measures are not universally equivalent.'' This will be carefully investigate in further works. Another evidence measure that is a Bayesian competitor is the FBST (Full Bayesian Significance Test) proposed originally by \cite{PereiraStern}. See also an invariant version under reparametrizations in \cite{MadrugaPereiraStern} and we refer the reader to \cite{PereiraSternWechsler} for an extensive review of this latter method.

\section{Some important properties}\label{prop}

In this section we show some important properties of $s$-values that will be used to connect them with possibility measures and the abstract belief calculus (see Section \ref{ABCformalis}). First consider the following conditions:

\begin{enumerate}
\item[C1.] $\widehat{\theta}$ is an interior point of $\Theta$, 
\item[C2.] $\ell$ is strictly concave.
\end{enumerate}

Our first theorem states that item 1 of Definition \ref{Evid} holds for the proposed $s$-value.

\begin{theorem}\label{Item1} Let $s$ be an $s$-value and consider condition C1, then $s(\varnothing) = 0$ and $s(\Theta) = 1$. 
\end{theorem}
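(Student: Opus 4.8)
The plan is to treat the two claims separately, since each reduces to inspecting the defining set $A(\Theta_0) = \{\alpha \in (0,1) : \Lambda_\alpha \cap \Theta_0 \neq \varnothing\}$, whose supremum (truncated below at $0$) is the $s$-value of Definition \ref{s-value}.

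For $s(\varnothing) = 0$ I would first observe that $\Lambda_\alpha \cap \varnothing = \varnothing$ for every $\alpha \in (0,1)$, so the set $A(\varnothing)$ is empty. Adopting the usual convention $\sup \varnothing = -\infty$, the outer truncation in Definition \ref{s-value} yields $s(\varnothing) = \max\{0, -\infty\} = 0$. This is precisely the degenerate case the $\max\{0, \cdot\}$ was built to absorb, and no regularity condition is required here.

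For $s(\Theta) = 1$ the strategy is to show that the maximum likelihood estimator $\widehat{\theta}$ lies inside every confidence region $\Lambda_\alpha$, forcing $A(\Theta) = (0,1)$. Two ingredients are needed. First, because $\widehat{\theta}$ maximizes $\ell$ over $\Theta$, the statistic $T_{\theta_0} = 2(\ell(\widehat{\theta}) - \ell(\theta_0))$ is nonnegative for every $\theta_0$; hence its reference distribution $F$ is supported on $[0,\infty)$ and each of its quantiles satisfies $F_\alpha \geq 0$ for all $\alpha \in (0,1)$. Second, condition C1 guarantees that the supremum defining $\widehat{\theta}$ is attained at a genuine (interior) point of $\Theta$, so that $\widehat{\theta} \in \Theta$ and $T_{\widehat{\theta}} = 2(\ell(\widehat{\theta}) - \ell(\widehat{\theta})) = 0$. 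Combining the two facts gives $T_{\widehat{\theta}} = 0 \leq F_\alpha$, whence $\widehat{\theta} \in \Lambda_\alpha \cap \Theta$ for every $\alpha \in (0,1)$.

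Consequently $A(\Theta) = (0,1)$, and therefore $s(\Theta) = \max\{0, \sup (0,1)\} = \max\{0, 1\} = 1$, which closes the argument. I expect the only delicate point to be the justification of $F_\alpha \geq 0$: it rests entirely on the nonnegativity of $T_{\theta_0}$, which is itself the defining property of the MLE, whereas condition C1 serves only to secure the existence of $\widehat{\theta}$ as an actual point of $\Theta$ at which $T$ vanishes. Everything else is routine bookkeeping with suprema and the empty-set convention.
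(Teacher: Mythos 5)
Your proof is correct and follows essentially the same route as the paper: the empty set forces $A(\varnothing)=\varnothing$ so the $\max\{0,\cdot\}$ truncation gives $0$, while condition C1 places $\widehat{\theta}$ in $\Theta$ with $T_{\widehat{\theta}}=0\leq F_\alpha$, making $A(\Theta)=(0,1)$. The only difference is that you spell out the justification of $F_\alpha\geq 0$ (via nonnegativity of $T_\theta$), a step the paper leaves implicit; this is a welcome clarification, not a deviation.
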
 

\begin{proof} As $\widehat{\theta} \in \Theta$ (see condition C1), we have that $\{\alpha \in (0,1): \ \Lambda_\alpha \cap \Theta \neq \varnothing\}  = (0,1)$  and then $s(\Theta) = 1$. Also,  $\{\alpha \in (0,1): \ \Lambda_\alpha \cap \varnothing \neq \varnothing\}  = \varnothing$, then $s(\varnothing) = \max\{0, \sup(\varnothing)\} = 0$.   
\end{proof}

The following theorem completes the requirement for the $s$-value to be a measure of evidence.

\begin{theorem}\label{Nested-T}(Nested hypotheses) For a fixed data $X=x$ , let $H_{01}: \theta \in \Theta_{01}$ and $H_{02}: \theta \in \Theta_{02}$ be two null hypotheses such that $\Theta_{01} \subseteq \Theta_{02}$. Then, $s(\Theta_{01}) \leq s(\Theta_{02})$, where $s(\Theta_{01})$ and $s(\Theta_{02})$ are evidence measures for $H_{01}$ and $H_{02}$, respectively. 
\end{theorem}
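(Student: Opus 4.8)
The plan is to exploit the fact that, for fixed data $X = x$, the family of likelihood-based confidence regions $\{\Lambda_\alpha\}_{\alpha \in (0,1)}$ is computed once and for all from the same log-likelihood $\ell$ and the same reference distribution $F$; it does not depend on which null set is being tested. The two $s$-values $s(\Theta_{01})$ and $s(\Theta_{02})$ are therefore obtained by intersecting the \emph{identical} regions $\Lambda_\alpha$ with $\Theta_{01}$ and $\Theta_{02}$, respectively. This is precisely the structural property that $p$-values lack (recall Examples \ref{Ex-normal1} and \ref{reg}, where a different reference $\chi^2$ metric is used for each hypothesis), and it reduces the claim to a short monotonicity argument for suprema.

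First I would introduce the two index sets
\[
A_i = \{\alpha \in (0,1): \ \Lambda_\alpha \cap \Theta_{0i} \neq \varnothing\}, \qquad i = 1, 2,
\]
so that $s(\Theta_{0i}) = \max\{0, \sup A_i\}$ by Definition \ref{s-value}. The key step is to show $A_1 \subseteq A_2$. Fix any $\alpha \in A_1$; by definition there is a point $\theta^\ast \in \Lambda_\alpha \cap \Theta_{01}$. Since $\Theta_{01} \subseteq \Theta_{02}$, that same $\theta^\ast$ lies in $\Theta_{02}$, hence $\theta^\ast \in \Lambda_\alpha \cap \Theta_{02}$, which shows $\Lambda_\alpha \cap \Theta_{02} \neq \varnothing$ and therefore $\alpha \in A_2$. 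This establishes the inclusion $A_1 \subseteq A_2$.

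From $A_1 \subseteq A_2$, monotonicity of the supremum gives $\sup A_1 \leq \sup A_2$ (using the convention $\sup \varnothing = -\infty$), and applying the nondecreasing map $t \mapsto \max\{0, t\}$ preserves the inequality, yielding $s(\Theta_{01}) \leq s(\Theta_{02})$. The only point needing care is the degenerate case $A_1 = \varnothing$, in which $s(\Theta_{01}) = \max\{0, \sup\varnothing\} = 0 \leq s(\Theta_{02})$ trivially; the truncation at $0$ in Definition \ref{s-value} is exactly what makes this edge case harmless. I do not expect any genuine obstacle here: the whole weight of the argument rests on the single family $\{\Lambda_\alpha\}$ serving both hypotheses, so the monotonicity that fails for $p$-values becomes automatic for the $s$-value. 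Notably, conditions C1 and C2 are not even needed for this theorem, as the argument is purely set-theoretic.
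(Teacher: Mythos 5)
Your proof is correct and follows essentially the same route as the paper's: both reduce the claim to the inclusion $\{\alpha : \Lambda_\alpha \cap \Theta_{01} \neq \varnothing\} \subseteq \{\alpha : \Lambda_\alpha \cap \Theta_{02} \neq \varnothing\}$ and then take suprema, using that the family $\{\Lambda_\alpha\}$ is the same for both hypotheses. Your explicit handling of the empty case and the $\max\{0,\cdot\}$ truncation, and the remark that C1--C2 are not needed, are minor refinements of the paper's (terser) argument, not a different approach.
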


\begin{proof} Observe that if $\Theta_{01} \subseteq \Theta_{02}$, then
$\Lambda_\alpha \cap \Theta_{01} \subseteq \Lambda_\alpha \cap \Theta_{02}$ and
$\{\alpha \in (0,1): \ \Lambda_\alpha \cap \Theta_{01} \neq \varnothing\} \subseteq \{\alpha \in (0,1): \ \Lambda_\alpha \cap  \Theta_{02} \neq \varnothing\}.$ We conclude that $s(\Theta_{01}) \leq s(\Theta_{02})$ for all $\Theta_{01} \subseteq \Theta_{02} \subseteq \Theta$.
\end{proof}

Other important feature of our proposal is its invariance under reparametrizations. As likelihood-based confidence regions are invariant under reparametrizations \citep[see][]{Schweder}, the $s$-value is also invariant.  Based on Theorem \ref{Nested-T} we can establish now an interesting result which is related to the Burden of Proof, namely, the evidence in favour of a composite hypothesis is the most favourable evidence in favour of its terms \cite[][]{Stern2003}.

\begin{lemma}\label{MFI}(Most Favourable Interpretation) Let $I$ be a countable or uncountable real subset, assume that $\Theta_0 = \bigcup_{i\in I} \Theta_{0i}$ is nonempty, then $s(\Theta_0) = \sup_{i \in I}\{s(\Theta_{0i})\}$.
\end{lemma}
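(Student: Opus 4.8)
The plan is to reduce the statement to the elementary order-theoretic fact that the supremum of a union of real sets equals the supremum of their suprema, after observing that intersection with a fixed $\Lambda_\alpha$ distributes over the union defining $\Theta_0$. Writing $S(\Theta') = \{\alpha \in (0,1): \ \Lambda_\alpha \cap \Theta' \neq \varnothing\}$ for any $\Theta' \subseteq \Theta$, so that $s(\Theta') = \max\{0, \sup S(\Theta')\}$, my first step is to record the set identity
\[
\Lambda_\alpha \cap \Theta_0 = \Lambda_\alpha \cap \bigcup_{i \in I} \Theta_{0i} = \bigcup_{i \in I}(\Lambda_\alpha \cap \Theta_{0i}),
\]
which holds for an arbitrary (possibly uncountable) index set $I$. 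Consequently $\Lambda_\alpha \cap \Theta_0 \neq \varnothing$ if and only if $\Lambda_\alpha \cap \Theta_{0i} \neq \varnothing$ for at least one $i$, that is, $S(\Theta_0) = \bigcup_{i\in I} S(\Theta_{0i})$. Everything then follows from this single identity together with monotonicity.

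For the inequality $\sup_{i} s(\Theta_{0i}) \leq s(\Theta_0)$ I would simply invoke Theorem \ref{Nested-T}: since $\Theta_{0i} \subseteq \Theta_0$ for every $i$, we have $s(\Theta_{0i}) \leq s(\Theta_0)$, and taking the supremum over $i$ preserves the bound. For the reverse inequality I would use the set identity directly: every $\alpha \in S(\Theta_0)$ lies in some $S(\Theta_{0i})$, whence $\alpha \leq \sup S(\Theta_{0i}) = s(\Theta_{0i}) \leq \sup_{j} s(\Theta_{0j})$, where the middle equality holds because a nonempty subset of $(0,1)$ has a strictly positive supremum, so the outer $\max\{0,\cdot\}$ is inactive as soon as $S(\Theta_{0i})$ is nonempty. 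Taking the supremum over all $\alpha \in S(\Theta_0)$ yields $\sup S(\Theta_0) \leq \sup_{j} s(\Theta_{0j})$, and hence $s(\Theta_0) \leq \sup_{j} s(\Theta_{0j})$.

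The only remaining care is in the degenerate cases, which is where I expect what little friction there is: if $S(\Theta_0) = \varnothing$ then $s(\Theta_0) = 0$ and the reverse inequality is trivial because every $s$-value is nonnegative; and if some individual $S(\Theta_{0i})$ is empty, its term contributes $0$ to the right-hand supremum and is harmless. I would stress that no countability assumption on $I$ is used anywhere — the supremum of a union equals the supremum of the suprema regardless of cardinality — which is precisely the \emph{maxitive} behaviour that distinguishes a possibility measure from a merely countably additive probability measure, and it is exactly what permits the arbitrary unions allowed in the statement. Finally, I would note that conditions C1 and C2 play no role here; the result rests solely on monotonicity (Theorem \ref{Nested-T}) and the distributive law.
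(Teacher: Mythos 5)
Your proof is correct and follows essentially the same route as the paper's: one inequality via Theorem \ref{Nested-T}, the other via the identification of $\{\alpha : \Lambda_\alpha \cap \Theta_0 \neq \varnothing\}$ with the union $\bigcup_{i\in I}\{\alpha : \Lambda_\alpha \cap \Theta_{0i} \neq \varnothing\}$ and the elementary fact that the supremum of a union is the supremum of the suprema. Your treatment is slightly more careful than the paper's (you handle the empty-set degenerate cases and the $\max\{0,\cdot\}$ explicitly, and note that C1--C2 are not needed), but the underlying argument is the same.
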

\begin{proof} By Theorem \ref{Nested-T}, we know that $s(\Theta_0)\geq s(\Theta_{0i})$ for all $i\in I$, then $s(\Theta_0) \geq \sup_{i\in I}\{s(\Theta_{0i})\}$. To prove this lemma, we must show that $s(\Theta_0) \leq \sup_{i \in I}\{s(\Theta_{0i})\}$. 

Define $A(B) = \{\alpha \in (0,1): \Lambda_\alpha \cap B \neq \varnothing\}$ and note that $A(\Theta_0) \subseteq \bigcup_{i\in I}A(\Theta_{0i})$. Therefore, $\sup(A(\Theta_0)) \leq \sup\bigg(\bigcup_{i\in I}A(\Theta_{0i})\bigg)\Rightarrow \sup(A(\Theta_{0}))\leq \sup_{i \in I}\{\sup(A(\Theta_{0i}))\}$ and  $s(\Theta_0) = \sup_{i \in I}\{s(\Theta_{0i})\}$.
\end{proof}

Lemma \ref{MFI} states that $s$-values are possibility measures on $\Theta$, since they satisfy property in Lemma \ref{MFI} plus the conditions  in Definition \ref{Evid} \citep{DuboisPrade1990,Dubois2006}.  Next lemma presents an important result  (for strictly concave log-likelihood functions),  which allows us to connect $s$-values with $p$-values. 
  
\begin{lemma}\label{s-dist-lemma} Assume valid conditions C1 and C2. For a nonempty $\Theta_0$ and $F$ continuous and strictly increasing, the $s$-value can alternatively be defined as
\begin{equation}\label{s-dist}
s(\Theta_0) = \sup_{\theta \in \Theta_0}(1 - F(T_{\theta})) = 1 -F\big(T_{\Theta_0}\big),
\end{equation} where $T_\theta = 2(\ell(\widehat{\theta}) - \ell(\theta))$, $T_{\Theta_0} = 2(\ell(\widehat{\theta}) - \ell(\widehat{\theta}_0))$ and $\widehat{\theta}_0 = \arg \sup_{\theta \in \Theta_0} \ell(\theta)$. 
\end{lemma}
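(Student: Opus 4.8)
The plan is to reduce the whole statement to a single algebraic identity, $\inf_{\theta\in\Theta_0} T_\theta = T_{\Theta_0}$, and then to translate the defining supremum over confidence levels $\alpha$ into a statement about $F$ by means of the quantile relation $F(F_\alpha)=1-\alpha$.

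First I would record the elementary identity
\[
\inf_{\theta\in\Theta_0} T_\theta = \inf_{\theta\in\Theta_0} 2\big(\ell(\widehat{\theta})-\ell(\theta)\big) = 2\ell(\widehat{\theta}) - 2\sup_{\theta\in\Theta_0}\ell(\theta) = 2\big(\ell(\widehat{\theta})-\ell(\widehat{\theta}_0)\big) = T_{\Theta_0},
\]
which is pure algebra once the constrained maximizer $\widehat{\theta}_0=\arg\sup_{\theta\in\Theta_0}\ell(\theta)$ is known to exist. This is precisely where conditions C1 and C2 enter: strict concavity of $\ell$ together with $\widehat{\theta}$ being an interior point forces the superlevel sets of $\ell$ to be bounded, so both the unconstrained and the constrained maximizers exist and are unique, and hence $T_{\Theta_0}$ is well defined.

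Next I would characterize membership in the confidence region. For a fixed $\alpha$, the event $\Lambda_\alpha\cap\Theta_0\neq\varnothing$ means there exists $\theta\in\Theta_0$ with $T_\theta\leq F_\alpha$, which—because the infimum $T_{\Theta_0}$ is attained at $\widehat{\theta}_0$—is equivalent to $T_{\Theta_0}\leq F_\alpha$. Applying $F$, which is continuous and strictly increasing, and using $F(F_\alpha)=1-\alpha$, I get
\[
T_{\Theta_0}\leq F_\alpha \iff F(T_{\Theta_0})\leq 1-\alpha \iff \alpha\leq 1-F(T_{\Theta_0}).
\]
Thus $\{\alpha\in(0,1):\Lambda_\alpha\cap\Theta_0\neq\varnothing\}=(0,1)\cap\big(0,\,1-F(T_{\Theta_0})\big]$, whose supremum is $1-F(T_{\Theta_0})$; since $F\leq 1$ this value is nonnegative, the outer $\max\{0,\cdot\}$ of Definition \ref{s-value} is inactive, and $s(\Theta_0)=1-F(T_{\Theta_0})$. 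The first equality in (\ref{s-dist}) is then immediate from continuity and monotonicity of $F$, namely $\sup_{\theta\in\Theta_0}(1-F(T_\theta)) = 1-\inf_{\theta\in\Theta_0}F(T_\theta) = 1-F\big(\inf_{\theta\in\Theta_0}T_\theta\big) = 1-F(T_{\Theta_0})$.

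I expect the only delicate point to be the attainment issue in the membership characterization: if $\widehat{\theta}_0$ were not attained inside $\Theta_0$ (for instance when $\Theta_0$ is not closed), the equivalence would read $T_{\Theta_0}<F_\alpha$ with a strict inequality, and one must check that the supremum over $\alpha$ is unchanged, so the boundary level still contributes the value $1-F(T_{\Theta_0})$. Conditions C1 and C2 let me sidestep this by supplying a genuine attained maximizer, but I would remark that even without attainment the final formula survives, since only the supremum of the $\alpha$-set enters $s(\Theta_0)$.
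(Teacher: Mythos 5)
Your proof is correct and follows essentially the same route as the paper: both use the strict monotonicity of $F$ together with $F(F_\alpha)=1-\alpha$ to rewrite the condition $\Lambda_\alpha\cap\Theta_0\neq\varnothing$ as ``$\alpha\le 1-F(T_\theta)$ for some $\theta\in\Theta_0$,'' and then evaluate the supremum over $\alpha$. The only difference is organizational: you collapse $\inf_{\theta\in\Theta_0}T_\theta$ to $T_{\Theta_0}$ first, so that the admissible $\alpha$'s form an explicit interval $\bigl(0,\,1-F(T_{\Theta_0})\bigr]$, whereas the paper exchanges the suprema over $\alpha$ and $\theta$ pointwise (invoking strict concavity for nestedness of the regions, a step your arrangement shows to be unnecessary), and your handling of the non-attainment edge case is, if anything, more careful than the paper's.
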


\begin{proof}  If $\Theta_0$ is nonempty, there exists $\alpha \in (0,1)$ such that $\Lambda_\alpha \cap \Theta_0 \neq \varnothing$ and the $s$-value is just $s(\Theta_0) = \sup\{\alpha \in (0,1): \ \Lambda_\alpha \cap \Theta_{0} \neq \varnothing\}$.
 Notice that, as $F$ is strictly increasing, we have that
\begin{align*}
\Lambda_\alpha \cap \Theta_{0} & \equiv \{\theta \in \Theta_0: T_\theta \leq F_\alpha\}\\
& \equiv  \big\{\theta \in \Theta_0: F(T_\theta ) \leq 1-\alpha\big\}\\
&\equiv \big\{\theta \in \Theta_0:  1 - F(T_\theta ) \geq \alpha\big\}.
\end{align*}
As $\ell$ is strictly concave, then for all $0<\alpha\leq s\leq1$ we have $\Lambda_s \cap \Theta_0 \subseteq \Lambda_{\alpha}\cap \Theta_0$ and  \[\sup\{\alpha \in (0,1): \ \Lambda_\alpha \cap \Theta_{0} \neq \varnothing\}= \sup_{\theta \in \Theta_0} \{1 -F(T_\theta)\}\] and as $F$ is continuous and strictly increasing
\[\sup_{\theta \in \Theta_0} \{1 -F\big(T_\theta)\big)\} =1 -F\big(T_{\Theta_0}),  \]
 where  $T_\theta = 2(\ell(\widehat{\theta}) - \ell(\theta))$, $T_{\Theta_0} = 2(\ell(\widehat{\theta}) - \ell(\widehat{\theta}_0))$ and $\widehat{\theta}_0 = \arg\sup_{\theta \in \Theta_0} \ell(\theta)$. 
\end{proof}
Lemma \ref{s-dist-lemma} basically states that our proposed measure is isomorphic to the likelihood statistic when $F$ is continuous and strictly increasing and the log-likelihood function $\ell$ is strictly concave. This lemma connects our proposal with the work of  \cite{Dubois1997} and then, if the assumptions of this lemma hold, all results derived by these authors are also valid for our proposal.

The value of $\widehat{\theta}_0 $ can be seen as the point of $\Theta_0$ which is in the boundary of $(1-s)\times 100\%$ confidence region for $\theta$. Notice that if $F$ is a non-decreasing function then we just can claim that \[\{\theta \in \Theta_0: T_\theta \leq F_\alpha\} \subseteq  \big\{\theta \in \Theta_0: F(T_\theta) \leq 1-\alpha\big\},\] the converse inclusion may not be valid. Typically, $F$ can be approximated to a quisquare distribution with $k$ degrees of freedom, where $k$ is the dimension of $\Theta$ (this is a continuous and strictly increasing function). Based upon this alternative version we can directly compare $s$-values with $p$-values. In addition, it is possible to derive the distribution of $s$.

Notice that, for one-sided null hypotheses $H_0$ and monotonic likelihood ratio, the corresponding $p$-value would be easily computed by 
\[p = P_{\Theta_0}(T_{\Theta_0} > t_{\Theta_0}) = 1 - F_{H_0}(t_{\Theta_0}),\] 
 where $F_{H_0}(t_{\Theta_0}) = P_{\Theta_0}(T_{\Theta_0}\leq t_{\Theta_0}) $ is the (asymptotic) cumulative distribution of $T_{\Theta_0}$ that depends on $H_0$ \citep[see][]{Mudholkar} and $t_{\Theta_0}$ is the observed value of $T_{\Theta_0}$. Now, by Equation (\ref{s-dist}), we find a duality between $s$-values and $p$-values which is self-evident from Lemma \ref{s-dist-lemma}.

\begin{lemma}\label{l-rel} Consider valid conditions C1 and C2. If $F$ and $F_{H_0}$ are continuous and strictly increasing functions and $H_0$ is an one-sided hypothesis, then the following equalities hold 
\begin{equation}\label{relation}p = 1 - F_{H_0}(F^{-1}(1 - s)) \quad \mbox{and} \quad s = 1 - F(F^{-1}_{H_0}(1 - p)). \end{equation}
\end{lemma}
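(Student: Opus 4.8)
The plan is to observe that, under the stated hypotheses, both $s$ and $p$ are strictly monotone functions of one and the same observed quantity, namely the likelihood ratio statistic $t_{\Theta_0} = 2(\ell(\widehat{\theta}) - \ell(\widehat{\theta}_0))$ evaluated at the fixed data $X=x$. Once this is recognized, the two displayed identities follow simply by solving each expression for $t_{\Theta_0}$ and then eliminating $t_{\Theta_0}$ between them.

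First I would invoke Lemma \ref{s-dist-lemma}: since conditions C1 and C2 hold and $F$ is continuous and strictly increasing, Equation (\ref{s-dist}) gives $s = 1 - F(t_{\Theta_0})$. Because $F$ is continuous and strictly increasing it admits an inverse on its range, so I can solve to obtain $t_{\Theta_0} = F^{-1}(1-s)$. Next, for a one-sided $H_0$ with monotone likelihood ratio the $p$-value reduces to $p = P_{\Theta_0}(T_{\Theta_0} > t_{\Theta_0}) = 1 - F_{H_0}(t_{\Theta_0})$, as noted just before the statement. Substituting $t_{\Theta_0} = F^{-1}(1-s)$ into this expression yields the first equality $p = 1 - F_{H_0}(F^{-1}(1-s))$. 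For the second equality I would run the same argument in the other direction: since $F_{H_0}$ is continuous and strictly increasing, $p = 1 - F_{H_0}(t_{\Theta_0})$ inverts to $t_{\Theta_0} = F_{H_0}^{-1}(1-p)$, and inserting this into $s = 1 - F(t_{\Theta_0})$ gives $s = 1 - F(F_{H_0}^{-1}(1-p))$.

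The step that carries all the real content is the reduction $s = 1 - F(t_{\Theta_0})$ supplied by Lemma \ref{s-dist-lemma}; everything after that is an elementary change of variable. The point requiring care is therefore not the algebra but the verification that the observed statistic appearing in the $s$-value and in the $p$-value is literally the same object: the supremum of the likelihood over $\Theta_0$ that defines $\widehat{\theta}_0$ (hence $t_{\Theta_0}$) in Lemma \ref{s-dist-lemma} must coincide with the value of $T_{\Theta_0}$ at which the one-sided $p$-value is evaluated, and this is exactly what the monotone-likelihood-ratio assumption on the one-sided $H_0$ is meant to guarantee. I would also make explicit that the continuity and strict monotonicity of both $F$ and $F_{H_0}$ are used solely to ensure that the two inverses $F^{-1}$ and $F_{H_0}^{-1}$ exist and are well defined on the relevant arguments $1-s$ and $1-p$, so that the two identities are genuine mutual inverses of each other rather than the mere one-sided inclusions that arise when $F$ is only non-decreasing.
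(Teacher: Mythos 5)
Your proof is correct and follows essentially the same route as the paper: the paper treats the lemma as ``self-evident'' from Lemma \ref{s-dist-lemma} combined with the one-sided $p$-value formula $p = 1 - F_{H_0}(t_{\Theta_0})$ stated just before the lemma, which is exactly the elimination of the common statistic $t_{\Theta_0}$ that you carry out. Your explicit remarks on why strict monotonicity of $F$ and $F_{H_0}$ is needed for the inverses, and on the two expressions involving the same observed statistic, simply make precise what the paper leaves implicit.
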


When $H_0$ is a two-sided hypothesis, then the relation between the $s$-value and the $p$-value is different, but in this paper we do not investigate this further. Also, optimal $p$-values when the null hypothesis is two-sided is proposed by \cite{Mudholkar},  a connection with $s$-values might be studied in further works. Note that,  under general regularity conditions on the likelihood function and considering that $\Theta_0$ is a smooth semi-algebraic subset of $\Theta$, all asymptotics for the $s$-value can be derived by using the duality relation presented above in Equation \ref{relation}.

As for the case where $T_{\Theta_0}$ is not (asymptotically) ancillary to $\mathcal{P}_T^0$, the most conservative p-value computed by using the likelihood ratio statistic
\[p(\Theta_0)= \sup_{\theta \in \Theta_0} P_{\theta, \Theta_0}(T_{\Theta_0}>t_{\Theta_0}) = \sup_{\theta \in \Theta_0} (1 - P_{\theta, \Theta_0}(T_{\Theta_0} \leq t_{\Theta_0})) = 1 -  F_{H_0}^*(t_{\Theta_0})\] where \[F_{H_0}^*(t_{\Theta_0}) = \inf_{\theta \in \Theta_0} P_{\theta,\Theta_0}(T_{\Theta_0} \leq t_{\Theta_0})\] is a cumulative function the depends on $H_0$ and if it is continuous and strictly increasing Lemma 3.3 is applicable.

 In usual frequentist significance tests, the error probability of type I characterizes the proportion of cases in which a null hypothesis $H_0$ would be rejected when it is true in a hypothetical long-run of repeated sampling. On the one hand, as a $p$-value usually has uniform distribution under $H_0$, the probability to obtain a $p$-value smaller than $\alpha$ is $\alpha$. On the other hand, we can only guarantee uniform distribution for the evidence value $s$ under the simple null hypothesis $\Theta_0 = \{\theta_0\}$, which specifies all parameters involved in the model. Since, it can be readily seen that if $\Theta_0 = \{\theta_0\}$, then $F \equiv F_{H_0}$ (an asymptotic quisquare with $k$ degrees of freedom) and therefore $s \sim U(0,1)$ (at least asymptotically). However, if $\Theta_0$ has dimension less than $k$, e.g., under curvature of parameters, the distribution $F_{H_0}$ would differ from $F$. Notice that the threshold value $\alpha$ adopted for $p$-values is not valid for $s$-values, the actual threshold for $s$-values should be computed using relation (\ref{relation}), i.e., $\alpha_{H_0} = 1 - F(F_{H_0}^{-1}(1-\alpha))$ would be the new cut-off. Of course, if the decision was based on this actual threshold, the same logical contradictions would arise.

\cite{PereiraSternWechsler} left a challenge to the reader, namely, to obtain the one-to-one relationship between the evidence value computed via FBST ($e$-value) and $p$-values. \cite{Diniz2012} showed that asymptotically the answer is given in Lemma \ref{l-rel} replacing the $s$-value with $e$-value, therefore, $s$-values and $e$-values are asymptotically equivalent. \cite{Polansky} proposed an observed confidence approach for testing hypotheses, however this approach differs from ours because, as stated on its Section 2.2 (The General Case), the proposed measure must satisfy the probability axioms. As we have seen, the $s$-value does not satisfy the probability axioms, instead it satisfies the possibility axioms.  Also, \cite{Polansky} seems to build confidence regions around the null set $\Theta_0$, which is a very different approach from the one we are proposing in this paper.

\section{$S$-value as an objective state of belief}\label{ABCformalis}

In this section we analyze the definition of $s$-values under the light of Abstract Belief Calculus (ABC) proposed by \cite{DarwicheGinsberg}. ABC is a symbolic generalization of probabilities. Probability is a function defined over a family of subsets (known as $\sigma$-field) of a main nonempty set $\Omega$  to the interval $[0,1]$. The additivity is the main characteristic of this function of subsets, that is, if $A$ and $B$ are disjoint measurable subsets, then the probability of the union $A \cup B$ is the sum of their respective probabilities. Basically, all theorems of probability calculus require this additive property. \cite{Cox1946} derived this sum rule from a set of more fundamental axioms \citep[see also][]{Jaynes1957, Aczel2004}, which tries to consider the following assertions
\begin{quote}
``... the less likely is an event to occur the more likely it is not to occur. The occurrence of both of two events will not be more likely and will generally be less likely than the occurrence of the less likely of the two. But the occurrence of at least one of the events is not less likely and is generally more likely than the occurrence of either'' \citep{Cox1946}.
\end{quote} 
The quotation above is alleged to be a fundamental part of any coherent reasoning and, based on it, some scholars claim that degrees of belief should be manipulated according to the laws of probability theory \citep{Cox1946, Jaynes1957, Ariel}.  The word ``likely'' could be replaced by ``probable'', ``possible'', ``plausible'' or any other that represents a measure for our belief or (un)certainty under  limited knowledge. \cite{DuboisPrade2001} said that, under a limited knowledge, 
\begin{quote}
``one agent that does not believe in a proposition does NOT imply the (s)he believes in its negation''
\end{quote} and also that 
\begin{quote}``uncertainty in propositional logic is \emph{ternary} and not binary: either a proposition is believed, or its negation is believed, or neither of them are believed''.
\end{quote}
Therefore, under a limited knowledge, the claim that ``the less possible is an event to occur the more possible it is not to occur'' is too restrictive to be of universal applicability in general beliefs. 

The Cox's demonstration is made through associativity functional equations (let $G: \mathbb{R}^2 \to \mathbb{R}$ be a real function with two real arguments, then $G(x,G(y,z)) = G(G(x,y),z)$ is the associativity functional equation) that represent the second sentence of the above quotation, the involved function ($G$) is considered continuous and strictly increasing in both its arguments. However, when this function is non-decreasing we also have a coherent reasoning \citep[see][]{DarwicheGinsberg} and other than additive rules emerge from this functional equation, such as minimum (maximum) as demonstrated by \cite{Marichal2000}. Therefore, probability is not the unique coherent way of dealing with uncertainties as usually thought and spread among some scholars. Moreover, \cite{DuboisPrade2001} clarify that probability theory is not a faithful representation of incomplete knowledge in the sense of classical logic as usual considered. 

In our case, the set that we want to define an objective state of belief is the parameter space $\Theta$. Here, the word ``objective'' means that no prior distributions over $\Theta$ are specified. Naturally there exists some level of subjective knowledge in the choice of models, parameter space and so on, these sources of subjectivity will not be discussed further. It is widely known that the frequentist school regards no probability distributions over the subsets of $\Theta$ and that probability distributions are only assigned for observable randomized events. Here, we show that the proposed $s$-value can prescribe an objective state of belief over the subsets of $\Theta$ without assigning any prior subjective probability distributions over the subsets of $\Theta$. It is quite obvious that an $s$-value is not a probability measure, since it is not additive. However, as we shall see in this section, $s$-values are indeed abstract states of belief.

Abstract belief calculus (ABC) is built considering more basic axioms than regarded in Cox derivation and, as a consequence, the additive property must be  generalized to a summation operator $\oplus$ such that the usual sum rule is a simple particular case (this theory also deals with propositions instead of sets, but here we consider that propositions are represented explicitly as sets). For the sake of completeness, we expose the main components of the ABC theory in what follows. Firstly, let $\mathcal{L}$ be a family of subsets $\Theta$ closed under unions, intersections and complements. The ABC starts defining a function $\Phi: \mathcal{L}\to \mathcal{S}$ called support function, and for each $A \in \mathcal{L}$, $\Phi(A)$ is called the support value of $A$. Then, in order to define the properties of this support function, a partial support structure $\langle \mathcal{S}, \oplus\rangle$ is defined such that the summation support  $\oplus: \mathcal{S} \times \mathcal{S} \to \mathcal{S}$ satisfies the following properties: 
\begin{itemize}

\item Symmetry: $a \oplus b = b \oplus a$ for every $a,b \in \mathcal{S}$
\item Associativity: $(a \oplus b) \oplus c = a \oplus(b \oplus c)$, for every $a,b,c \in \mathcal{S}$
\item Convexity: For every $a,b,c \in \mathcal{S}$, such that $(a\oplus b) \oplus c = a$, then also $a \oplus b = a$
\item Zero element: There exists a unique $\bm{0}\in \mathcal{S}$ such that $a \oplus \bm{0} = a$ for all $a \in \mathcal{S}$
\item Unit element: There exists a unique element $\bm{1} \in \mathcal{S}$, where $\bm{1} \neq \bm{0}$, such that for each $a \in \mathcal{S}$, there exists $b \in \mathcal{S}$ such that $a \oplus b = \bm{1}$
\end{itemize}

Then, the properties of $\Phi$ are the following
\begin{enumerate}
\item For $A, B \in \mathcal{L}$, such that $A \subseteq B$ and $B \subseteq A$, then $\Phi(A) = \Phi(B)$
\item For $A, B \in \mathcal{L}$, such that $A \cap B = \emptyset$,  then $\Phi(A \cup B) = \Phi(A)\oplus \Phi(B)$
\item For $A, B, C \in \mathcal{L}$, such that $A \subseteq B \subseteq C$ and $\Phi(A) = \Phi(C)$, then $\Phi(A) = \Phi(B)$
\item $\Phi(\varnothing) = \bm{0}$ and $\Phi(\Theta) = \bm{1}$
\end{enumerate}

Backing to our proposal and taking $\mathcal{S} = [0,1]$, $\oplus \equiv \sup$,  $\bm{0} = 0$, $\bm{1} = 1$ and $\Phi \equiv s$ we see, by Theorems \ref{Item1} and \ref{Nested-T}, that the $s$-value satisfies all the properties above. Therefore, for our problem, we identify that our $s$-value is acting precisely as a support function $\Phi$ in the ABC formalism. It is noteworthy that the support value of $A$ does not determine the support value of $A^c$ (the complement of $A$). This determination happens in the probability calculus since, when $\mathcal{S} = [0,1]$ and $\oplus \equiv +$, the probabilities of sets that form a partition of the total space must sum up to one. Then, for abstract support functions, \cite{DarwicheGinsberg} defined the degree of belief function $\ddot{\Phi}: \mathcal{L} \to \mathcal{S}\times \mathcal{S}$, such that $\ddot{\Phi}(A) = \langle \Phi(A), \Phi(A^c)\rangle$. The value of $\ddot{\Phi}(A)$ is said to be the degree of belief of $A$. Naturally, for the probability calculus this function is vacuous, since if $\mathcal{S}=[0,1]$, $\oplus \equiv +$ and $\Phi$ is a probability function then for any measurable subset $A$, $\ddot{\Phi}(A) = \langle \Phi(A), 1 - \Phi(A)\rangle $.

Now, let $a, b \in \mathcal{S}$, then if there exists  $c \in \mathcal{S}$ such that $a \oplus c = b$ we say that the support value $a$ is no greater than the support value $b$ and we use the notation $a \preceq_\oplus b$, the symbol $\preceq_\oplus$ is called  as support order.  \cite{DarwicheGinsberg} showed that $\preceq_\oplus$ is a partial order under which $\bm{0}$ is minimal  and $\bm{1}$ is maximal. Also, let $\ddot{\Phi}(A) = \langle a_1, a_2 \rangle$ and $\ddot{\Phi}(B) = \langle b_1, b_2\rangle$ be degrees of beliefs of $A$ and $B$, respectively. We say that the degree of belief of $A$ is no greater than the degree of belief of $B$ if $a_1 \preceq_\oplus b_1$ and $b_2 \preceq_\oplus b_1$, this is represented by $\ddot{\Phi}(A)\sqsubseteq_\oplus \ddot{\Phi}(B)$. \cite{DarwicheGinsberg} also showed that $\sqsubseteq_\oplus$ is a partial order under which $\langle\bm{0},\bm{1}\rangle$ is minimal and  $\langle\bm{1}, \bm{0}\rangle$ is maximal. In this coherent framework, $\ddot{\Phi}(A) = \langle \bm{1}, \bm{1}\rangle$ is fully possible without having any contradictions (of course that in probability measures this cannot happen). The authors clarified this in terms of propositions, see the quotation below:
\begin{quote}
``A sentence is rejected precisely when it is supported to degree $\bm{0}$. And a sentence is accepted only if it is supported to degree $\bm{1}$. But if the sentence is supported to degree $\bm{1}$, it is not necessarily accepted. For example, when degrees of support are $\mathcal{S} = \{\mbox{\emph{possible}, \emph{impossible}}\}$, a sentence and its negation could be \emph{possible}. Here, both the sentence and its negation are supported to degree \emph{possible}, but neither is accepted.''
\end{quote}

As our $s$-value is a support function for the subsets of $\Theta$, we may used this measure to state degrees of belief for the subsets of $\Theta$. Notice that we cannot do this with the usual concept of $p$-value by the following. If $A \subset B \subset \Theta$, then $A $ e $A^c\cap B$ are disjoint sets, therefore as $B = A \cup (A^c\cap B)$ we have by Property 2 that $\Phi(B) = \Phi(A)\oplus \Phi(A^c\cap B)$ and then $\Phi(A) \preceq_\oplus \Phi(B)$, that is, if $A$ is a subset of $B$ the support value $\Phi(A)$ is no greater than the support value $\Phi(B)$. By our Examples \ref{Ex-normal1} and \ref{reg} we see that $p$-values do not satisfy this requirement. As aforementioned, $s$-values are not probabilities measures on the subsets of $\Theta$ and this is not a weakness as some may argue. For instance, for subsets of $\Theta$ with dimensions smaller than $\Theta$, the best  measure of evidence that probability measures can provide is zero. We remark that $p$-values and the Bayesian $e$-values \citep{PereiraStern} are not probability measures on $\Theta$, moreover, usually defined $p$-values cannot even be included in the ABC formalism to establish objective states of belief on the subsets of $\Theta$. Hence, $s$-values can be used to fill this gap.

Consider the null hypothesis $H_0: \theta \in \Theta_0$, by Condition C1, we know that $\widehat{\theta} \in \Theta_0$ or $\widehat{\theta} \in \Theta_0^c$, then $s(\Theta_0) = 1$ or $s(\Theta^c)=1$. As the $s$-value is a measure of support and $\langle\bm{0},\bm{1}\rangle$ is minimal and $\langle\bm{1}, \bm{0}\rangle$ is maximal, we can readily reject $H$ provided that $\ddot{\Phi}(\Theta_0) = \langle s(\Theta_0),s(\Theta_0^c)\rangle= \langle 0,1\rangle$ and readily accept $H$ provided that  $\ddot{\Phi}(\Theta_0) =  \langle s(\Theta_0),s(\Theta_0^c)\rangle = \langle 1,0\rangle$. In these two cases we have complete knowledge. When $\ddot{\Phi}(\Theta_0) = \langle s(\Theta_0),s(\Theta_0^c)\rangle= \langle 1,1\rangle$ we have complete ignorance regarding this specific hypothesis and cannot either accept or reject, then we must perform other experiment for gathering more information. Typically, we have intermediate states of knowledge, namely: (1) $\ddot \Phi(\Theta_0) = \langle a,1\rangle$ or (2) $\ddot \Phi(\Theta_0) = \langle 1,b\rangle$, where $a,b \in [0,1]$. In  the first case, we say that there is evidence against $H_0$ if $a$ is sufficiently small (i.e., $a < a_c$ for some critical value $a_c$) and we say that the decision is unknown whenever $a$ is not sufficiently small. In the second case, we say that there is evidence in favor of $H_0$ if $b$ is sufficiently small (i.e., $b < b_c$ for some critical value $b_c$) and we say that the decision is unknown whenever $b$ is not sufficient small. The problem now is to define what is ``sufficiently small'' to perform a decision. The decision rule may be derived through loss functions or other procedure, we will study this issue in future works. Whatever the chosen procedure, it should respect the minimal ($\ddot \Phi(\Theta_0) = \langle 0,1\rangle$), maximal ($\ddot \Phi(\Theta_0) = \langle 1,0\rangle$) and inconclusive  ($\ddot \Phi(\Theta_0) = \langle 1,1\rangle$) features of possibility measures.

\section{Examples}\label{Exam}

In this section we apply our proposal to the Examples \ref{Ex-normal1} and \ref{reg} and we also consider an example for the Hardy-Weinberg equilibrium hypothesis. 

\begin{example}\label{Ex-normal2} Consider Example \ref{Ex-normal1}, after a straightforward computation we find \[T_\theta =2(\ell(\widehat{\theta}) - \ell(\theta)) = n(\bar{X} - \mu)^\top(\bar{X} - \mu),\] where $\bar{X} = (\bar{X}_1, \bar{X}_2)^\top$ and $\mu = (\mu_1, \mu_2)^\top$ and $F_\alpha$ is the $\alpha$ quantile from a quisquare distribution with two degrees of freedom. Then, $s$-values for $H_{01}: \mu_1=\mu_2 = 0$ and $H_{02}: \mu_1 = \mu_2$ are respectively $s_1 =  0.104$ and $s_2 =  0.105$. Note that the curve $\mu_1 = \mu_2$ intercepts $\Lambda_{s_2}$ at $(-0.01, -0.01)$. As expected for this case, $s_1 < s_2$, since $\Theta_{01} \subset \Theta_{02}$, in addition, $s_1$ and $s_2$ are near each other because the variables are independent. If the variables were correlated, those $s$-values would differ drastically (being $s_2$ always greater than $s_1$). 
\end{example}

\begin{example}\label{reg2} Consider Example \ref{reg}, the maximum likelihood estimates for $b_1$ and $b_2$ are respectively $\widehat{b}_1 = 0.1966$ and $\widehat{b}_2 = -0.1821$. Here, we find that \[T_\theta=2(\ell(\widehat{\theta}) - \ell(\theta)) = (\widehat{b} - b)^\top (x^\top x)(\widehat{b} - b).\] Then, $F_\alpha$ is the $\alpha$ quantile from a quisquare distribution with two degrees of freedom and the $s$-values for $H_{01}: b_1 = 0$, $H_{02}: b_2 = 0$ and $H_{03}: (b_1,b_2) = (0,0)$ are respectively $s_1 = 0.107$, $s_2 = 0.135$ and $s_3 = 0.101$. Therefore, as expected by the logical reasoning $s_1> s_3$ and $s_2 >s_3$.
\end{example}

We also compare our results with the FBST approach considering a trinomial distribution and the Hardy-Weinberg equilibrium hypothesis. 

\begin{example}\label{trinomial}
 Consider that we observe a vector of three values $x_1, x_2, x_3$,  in which the likelihood function is proportional to $\theta_1^{x_1}\theta_2^{x_2}\theta_3^{x_3}$, where $x_1+x_2+x_3 = n$ and the parameter space is $\Theta = \{\theta \in (0,1)^3: \ \theta_1+\theta_2 + \theta_3 = 1\}$. Here, we use the same settings described in Section  4.3 by \cite{PereiraStern}, that is, the null hypothesis is $\Theta_0 = \bigg\{\theta \in \Theta: \ \theta_3 = (1 - \sqrt{\theta_1})^2\bigg\}$ and $n=20$. 
\end{example}

Table \ref{tab:1} presents the $s$-values for all values of $x_1$ and $x_3$. The last two columns were taken from Table 2 of \cite{PereiraStern}. It should be said that we computed the $s$-values by using Definition \ref{s-value} instead of Relation (\ref{relation}), because the $p$-values were presented with two decimal places in \cite{PereiraStern} and this can induce distorted $s$-values. As it was seen, our proposal yields similar results to the FBST approach.

\section{Discussion and final remarks}\label{Disc}

\cite{BergerSellke} compare $p$-values with posterior probabilities (by using objective prior distributions) and find differences by an order of magnitude (when testing a normal mean, data may produce a $p$-value of $0.05$ and posterior probability of the null hypothesis of at least $0.30$). For an extensive review on  the relation between p-values and posterior probabilities, the reader is referred to \cite{Ghosh}. As opposed to posterior distributions, $p$-values do not hold the requirement of evidence measures, but one can also conclude that posterior probabilities cannot be used to reflect probabilities in a hypothetical long-run of repeated sampling. Also, posterior probabilities cannot provide a measure of evidence different from zero under sharp hypotheses (when the dimension of the null parameter space is smaller than the full parameter space). A Bayesian procedure that provides a positive evidence measure under sharp hypotheses is the FBST. In this context, $s$-values are directly comparable with the evidence measures of the FBST approach.  This latter procedure needs numerical integrations and maximizations, which may be difficult to be attained  for high dimensional problems. As we studied in previous sections, our procedure produces similar results to the FBST and can be readily used as a classical alternative (when the user does not want to specify prior distributions). Moreover, if one has a $p$-value computed via likelihood ratio statistic, then Relation (\ref{relation}) may be applied to compute the respective $s$-value without any further computational procedures (maximizations and integrations)  and, also, this relation allows to derive the $s$-value distribution (if desired).  Also, we should mention that $s$-values, when computed using the asymptotic distribution of $T_{\theta_0}$, do respect the famous likelihood principle, but we should say that it is not the main concern here, it is just a property of our approach. Naturally, if the exact distribution of $T_{\theta_0}$ is adopted, the likelihood principle may be violated.

When the null hypothesis is simple and specifies the full vector of parameters, say $\Theta_0 = \{\theta_0\}$, the proposed $s$-values are, in general, $p$-values \citep{Schweder}. Otherwise, $s$-values cannot be interpreted as $p$-values, instead, they must be treated as measures of evidence for null hypotheses. As aforementioned, when treated as evidence measures, $p$-values have some internal undesirable features (in some cases, for nested hypotheses $H_{01}$ and $H_{02}$, where $H_{01}$ is nested within $H_{02}$, $p$-values might give more evidence against $H_{02}$ than $H_{01}$). On the other hand, $p$-values respect the repeated sampling principle, that is, in the long-run average actual error of rejecting a true hypothesis is not greater than the reported error. In other words, as $p$-values have uniform distribution under the null hypothesis, the frequency of observing $p$-values smaller than $\alpha$ is $\alpha$. This is an external desirable aspect, since this allows us to verify model assumptions and adequacy, among many other things. The proposed $s$-values overcome that internal undesirable aspect of $p$-values, but the problem now is how to evaluate a critical value to establish a decision rule for a hypothesis based on $s$-values (this decision should respect the rules of possibility measures). If we want to respect the repeated sampling principle, based on Lemma \ref{l-rel}, we see that a critical value for $s$ depends on $H_0$. To see that, let $\alpha$ be the chosen critical value for the computed $p$-value, then it can be ``corrected'' to $\alpha_{H_0}' = 1 - F(F_{H_0}^{-1}(1-\alpha))$ for the respective $s$-value. This threshold value $\alpha_{H_0}'$ will respect the repeated sampling principle if and only if it varies with $H_0$. If we adopt this ``corrected'' critical value we will have the same internal undesirable features of  $p$-values. We must rely on other principles to compute the threshold value for our $s$-value, maybe based on loss functions. These loss functions may incorporate the scientific importance of a hypothesis to elaborate a reasonable critical value (this issue will be discussed in future work). It is well known that statistical significance is not the same as scientific significance, for a further discussion we refer the reader to \cite{Cox1977}. Naturally, we could also employ loss functions on $p$-values to find a threshold, however, the internal undesirable features of $p$-values will certainly bring problems to implement this without any logical conflicts.

There are many open issues that need more attention regarding $s$-values. Next we provide a list of open problems that we did not deal with in this article, but will be subject of our future research.  

\begin{enumerate}

\item To give a rigorous mathematical treatment when the log-likelihood function $\ell$ is not strictly concave. 

\item To derive a computational procedure to find $s$-values and their distribution for (semi)algebraic subsets $\Theta_0$ and not strictly concave $\ell$.

\item To compare theoretical properties of $s$-values by using other types of confidence regions. Monte Carlo simulations may be required.

\item To compare $s$-values with evidence values (e-value) computed via FBST \citep{PereiraStern} and other procedures such as the posterior Bayes factor \citep{Aitkin} in a variety of models by using actual data.

\item To derive a criterion to advise one out of three decisions ``acceptance'', ``rejection'' or ``undecidable'' of a null hypothesis without having any types of conflict.

\end{enumerate}

We end this paper by saying that we are not advocating a replacement of $p$-values by $s$-values. Instead, we just recommend $s$-values as additional measures to assist data analysis.  

\section*{Acknowledgements}

I gratefully acknowledge partial financial support from FAPESP. I also wish to thank Nat\'alia Oliveira Vargas and Silva for valuable suggestions on the writing of this manuscript, Corey Yanofsky for bringing to my attention the  Bickel's report \citep{Bickel2012} and Jonatas Eduardo Cesar for many valuable discussions on similar topics. This paper is dedicated to Professor Carlos Alberto de Bragan\c ca Pereira (Carlinhos) who motivates  his students and colleagues to think on the foundations of probability and statistics. He is head at the Bayesian research group at University of S\~ao Paulo and has made various contributions to the foundations of statistics. I also would like to thank three anonymous referees and the associate editor for their helpful comments and suggestions that led to an improved version of this paper.

\appendix

\begin{figure}[!htp]
\begin{center}
\includegraphics[angle = 270, scale = 0.5]{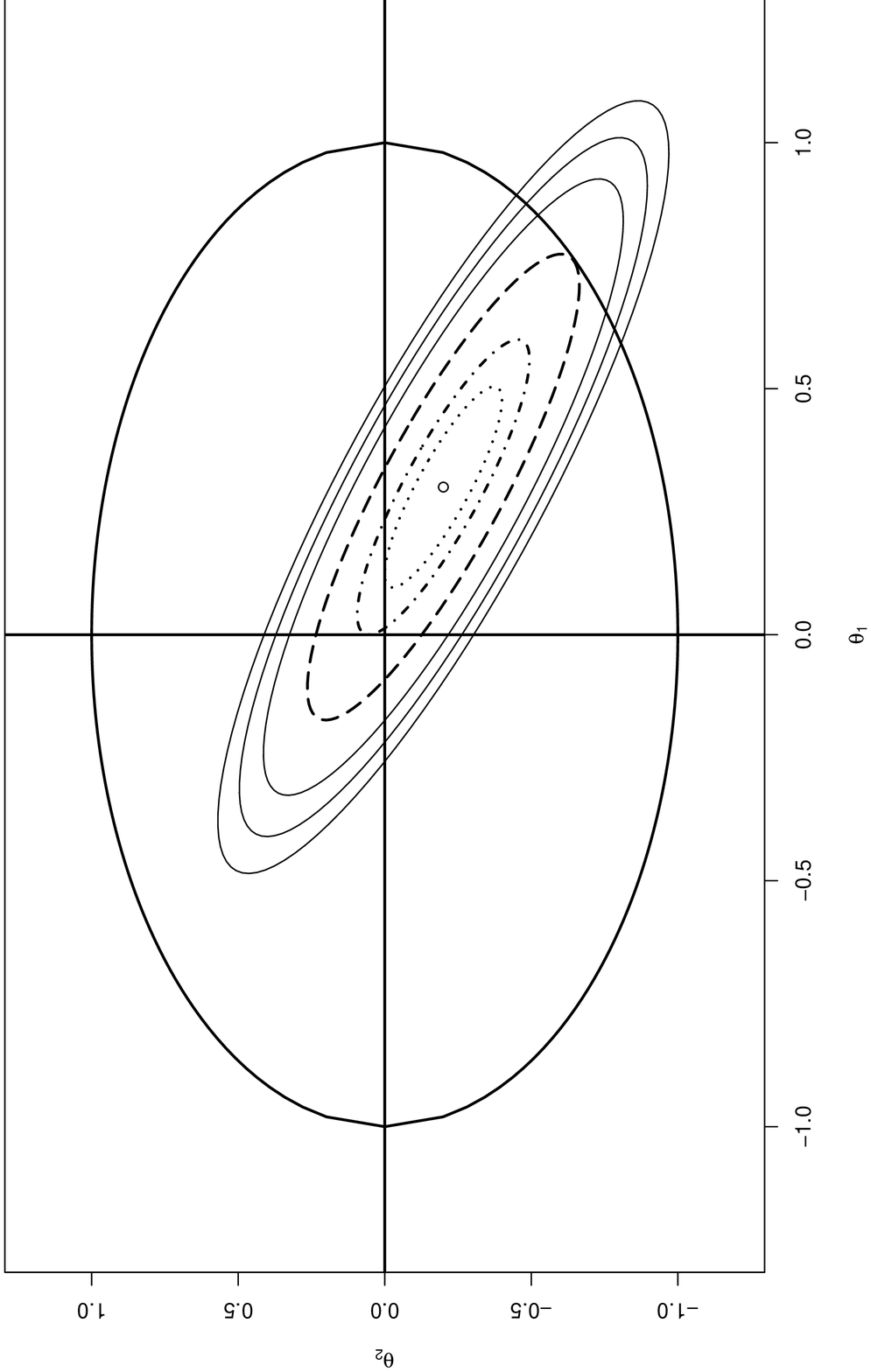}
\end{center}\caption{Borders of confidence regions $\Lambda_{\alpha}$, for different values of $\alpha$. The dotted line is $\Lambda_{s_1}$, where $s_1$ is the $s$-value for testing $H_{01}: \theta_1 = 0$. The dot-dashed line  is $\Lambda_{s_2}$, where $s_2$ is the $s$-value for testing $H_{02}: \theta_2 = 0$. The dashed line  is $\Lambda_{s_3}$, where $s_3$ is the $s$-value for testing $H_{03}: \theta_1^2 + \theta_2^2 = 1$. }
\label{fig:def}
\end{figure}

\begin{table}[!htp]\label{tab:1}
\centering
\begin{center}\caption{Tests of Hardy-Weinberg equilibrium}
\begin{tabular}{ccccc}\hline
$x_1$&$x_3$&$s$-value & $e$-value  & $p$-value \\  
     &     &          &  (FBST)    &     \\  \hline
1&2&0.00 &  0.01 &0.00 \\
1&3&0.02 &0.01 &0.01 \\
1&4&0.04&0.04 &0.02 \\
1&5&0.10&0.09 &0.04 \\
1&6&0.20&0.18 &0.08 \\
1&7&0.33&0.31 &0.15 \\
1&8&0.50&0.48 &0.26 \\
1&9&0.68&0.66 &0.39 \\
1&10&0.84&0.83 &0.57 \\
1&11&0.95&0.95 &0.77 \\
1&12& 1.00&1.00 &0.99 \\
1&13& 0.96&0.96 &0.78 \\
1&14& 0.85&0.84 &0.55 \\
1&15&0.68&0.66 &0.33 \\
1&16&0.48&0.47 &0.16 \\
1&17&0.29&0.27 &0.05 \\
1&18&0.13&0.12 &0.00 \\
5&0&0.01&0.02 &0.01 \\
5&1&0.10&0.09 &0.04 \\
5&2&0.32 &0.29 &0.14 \\
5&3& 0.63&0.61 &0.34 \\
5&4& 0.90&0.89 &0.65 \\
5&5&1.00      &  1.00            & 1.00      \\
5&6&0.91      &  0.90            & 0.66      \\
5&7&0.69      &  0.66            & 0.39      \\
5&8&0.44      &  0.40            & 0.20      \\
5&9&0.24      &  0.21            & 0.09      \\
5&10&0.11      &  0.09            & 0.04     \\
9&0&0.12      &  0.21            & 0.09      \\
9&1&0.68      &  0.66            & 0.39      \\
9&2&0.99      &  0.99            & 0.91      \\
9&3&0.87      &  0.86            & 0.59      \\
9&4&0.53      &  0.49            &0.26       \\
9&5&0.24      &  0.21            &0.09       \\
9&6&0.08      &  0.06            &0.03       \\
9&7&0.02      &  0.01            &0.01       \\\hline
\end{tabular}
\end{center}
\end{table}
\end{document}